\documentclass[a4paper,reqno,12pt]{amsart}
\usepackage{graphicx}
\usepackage{pslatex}
\usepackage{mathtools}
\usepackage{pgfplots}
\pgfplotsset{compat=1.15}
\usepackage{mathrsfs}
\usetikzlibrary{arrows}
\usepackage{amsmath,amsfonts,amssymb, amsthm}
\usepackage{mathrsfs}
\usepackage{rotating}
\usepackage{amssymb}
\usepackage{verbatim}
\usepackage{color}
\usepackage[utf8]{inputenc}
\usepackage{hyperref, latexsym}
\usepackage{url}

\newtheorem{theorem}{Theorem}
\newtheorem{conjecture}{Conjecture}
\newtheorem{lemma}[theorem]{Lemma}

\newtheorem{corollary}[theorem]{Corollary}
\newtheorem{remark}{Remark}
\newtheorem{definition}{Definition}

%    Absolute value notation

%    Blank box placeholder for figures (to avoid requiring any
%    particular graphics capabilities for printing this document).

\def\R{\mathbb R}
\def\H{\mathbb H}

\def\supp{\text{supp}}
\def\({\left(}
\def\){\right)}
\def\[{\left[}
\def\]{\right]}
\def\<{\left<}
\def\>{\right>}

%------------------------SPACING-----------------------

\usepackage{fullpage} 
\usepackage{setspace}
\onehalfspacing
\usepackage{adjustbox}

%-----------------------LABELS-REFERENCES-CHECKER--------

\usepackage{mathtools}
\mathtoolsset{showonlyrefs}

%--------------------------DATE--------------------

\def\today{\ifcase\month\or
  January\or February\or March\or April\or May\or June\or
  July\or August\or September\or October\or November\or December\fi
  \space\number\day, \number\year}

\newcommand{\A}{\mathcal{A}}
\newcommand{\B}{\mathcal{B}_{\infty}}

\newcommand{\D}{\mathbb{D}}
\newcommand{\E}{\mathcal{E}}

\newcommand{\G}{\mathcal{G}}
\renewcommand{\H}{\mathbb{H}}

%--------------SUPER-FANCY-SYMBOLS--------------------

%------------------------STANDARD-MATH-SYMBOLS----------------

\newcommand{\z}{\mathbb{Z}}

\renewcommand{\r}{\mathbb{R}}
\newcommand{\cp}{\mathbb{C}} %<--- because of my last name
\newcommand{\im}{{\rm Im}\,}
\newcommand{\re}{{\rm Re}\,}

%-----------BOLD--SYMBOLS-FOR-VECTOR-NOTATION--------------------

\newcommand{\wt}{\widetilde}

%--------------LAZYNESS-----------------------------------

\newcommand{\ga}{\gamma}
\newcommand{\al}{\alpha}
\newcommand{\be}{\beta}
\newcommand{\ep}{\varepsilon}

\newcommand{\si}{\sigma}

\renewcommand{\d}{\mathrm{d}}

\newcommand{\p}{\varphi}
\renewcommand{\ga}{\gamma}

\newcommand{\bl}{{BL}_+(\R)}
\renewcommand{\b}{{\rm Hom}(\H)}
\newcommand{\id}{{\rm Id}}
\renewcommand{\z}{\overline{z}}
\renewcommand{\p}{\varphi}
\newcommand{\tobl}{\xrightarrow{\bl}}
\newcommand{\aff}{{\rm Aff}(\H)}
\newcommand{\La}{\Lambda}
\renewcommand{\supp}{{\rm supp}}

\begin{document}
\title{On the Uniqueness of the Norton-Sullivan Quasiconformal extension}

\author{Jose A. Barrionuevo, Felipe Gonçalves, Jose Victor Medeiros and Lucas Oliveira}

\address{Departamento de Matem\'{a}tica - UFRGS, Porto Alegre, Brazil, 91509-900}
\email{josea@mat.ufrgs.br}

\address{IMPA - Instituto de Matemática Pura e Aplicada, Rio de Janeiro, 22460-320, Brazil.}
\email{goncalves@impa.br}

\address{IIMPA - Instituto de Matemática Pura e Aplicada, Rio de Janeiro, 22460-320, Brazil.}
\email{victor@impa.br}

\address{Departamento de Matem\'{a}tica - UFRGS, Porto Alegre, Brazil, 91509-900}
\email{lucas.oliveira@ufrgs.br}

\date{\today}

\maketitle

%------------------ABSTRACT------------------------------

\begin{abstract}
We show that the extension map 
$$
\mathcal{E}_{NS}(f)(z)=\frac{f(x+y)+f(x-y)}{2}+i\frac{f(x+y)-f(x-y)}{2}
$$
{for all } $z=x+iy\in\H$, defined by Norton and Sullivan in '96, is the only locally linear extension map taking bi-Lipschitz functions on $\R$ to quasiconformal functions on $\H$, modulo the action of a group isomorphic to the linear group. In fact, we discovered many other extensions like this one (lying in the orbit of such group action), such as: $f(x)\mapsto f(x)+i(f(x)-f(x-y))$.
\end{abstract}

\section{Introduction}

\subsection{Background}
Around the fifties, one important question in the theory of quasiconformal mappings was to decide the regularity of such mappings when acting on the boundary of a domain. In the complex plane, the simplest situation can be reduced, by the Riemann Mapping Theorem, to the boundary value of quasiconformal automorphisms of the upper half-plane $\H$ or the unit disk $\D$, depending on the framework . The goal was \emph{to characterize boundary homeomorphisms of the real line induced by quasiconformal homeomorphisms of the upper half-plane.} In their influential article \cite{Beurling-Ahlfors}, Ahlfors and Beurling completely classified the class of homeomorphisms of the real line that preserves orientation and that could appear as boundary values of quasiconformal mappings: such mappings are now called \emph{quasisymmetric}, and they admit the following characterization in terms of the $M$-condition: 
We say that a homeomorphism $f:\R\to\R$ verifies the $M$-condition if there is $M\ge1$ such that 
\begin{equation}\label{quasisymmetry}
    \frac{1}{M}\le \frac{f(x+t)-f(x)}{f(x)-f(x-t)}\le M \quad x\in \R \text{ and } t>0.
\end{equation}
We denote by $QS(\R)$ the class of all homeomorphisms that verify the $M$-condition, and we denote by $QC(\mathbb{H})$ the collection of all {quasiconformal\footnote{A homeomorphism $F:\H\to\H$ is quasiconformal if $F\in W^{1,2}_{loc}(\H)$ and if $\sup_{z\in \H} \frac{|\partial_{\overline{z}} F(z)|}{|\partial_z F(z)|}<1$.} automorphisms of the upper half-plane $\H:=\{z=x+iy: x\in \R, \, y>0\}$}. In fact, as part of their characterization, Ahlfors and Beurling showed that any $f\in QS(\R)$ can be extended to a quasiconformal mapping $\E_{AB}(f):\mathbb{H}\to \mathbb{H}$ given by
\begin{equation}\label{beurling-ahlfors-ext}
\E_{AB}(f)(z)=\frac{1}{2}\int_{-1}^{1} f(x+ty)dt+\frac{i}2\int_{-1}^{1} f(x+ty)\text{sign}(t)dt\,.
\end{equation}

In light of the above results, one could go further and ask if it is possible to produce an extension process that has extra properties, often needed in applications. We let $\aff$ denote the set of affine transformations $f(z)=az+b$ with $a>0$. Then a simple computation shows that $\E_{AB}(f|_{\R})=f$ for all $f\in \aff$ and that
\begin{equation}
\E_{AB}(f\circ g)=\E_{AB}(f)\circ \E_{AB}(g)   \quad  \text{ if } \  f\in \aff \  \text{ or } \ g\in \aff.
\end{equation} 
For applications in complex dynamics, for example, it is desirable to have an extension process that extends quasisymmetric homeomorphisms to quasiconformal homeomorphisms acting on the unit disk. This is given by the Douady and Earle extension process \cite{Douady-Earle}. In precise terms, they construct an extension map $\E_{DE}: QS(\mathbb{S})\to QC(\mathbb{D})$ such that $\E_{DE}(m|_{\mathbb{S}})=m$ for any Möbius automorphism and
$$
\E_{DE}(f\circ g)=\E_{DE} f \circ \E_{DE} g  \quad  \text{ if } \  f\  \text{ or } \ g \ \text{ is a Möbius transformation} .
$$
An extension with this property is called \emph{conformally natural}. Precisely, $\E_{DE}(f)(z)=w$ exactly when
\begin{equation}\label{douady-earle-ext}
\int_{\mathbb{S}^{1}}\frac{w-f(\zeta)}{1-\overline{w}f(\zeta)}\frac{|d\zeta|}{|\zeta-z|^2} = 0
\end{equation}
(it can be shown that for any $z\in \D$ the above equation has a unique solution $w\in \D$).  More details about such extensions can be found in \cite{Lehto-Virtanen, Hubbard-book1, Earle}.

We can go one step further and ask if we could impose that the group operation is preserved by the extension process, that is, composition in the domain is mapped into composition in the image; this is what Sullivan called the \emph{Dream Problem},  stated explicitly in \cite{Norton-Sullivan}:

\subsection*{Dream Problem} \emph{Is it possible to construct an extension $\mathcal{E}:QS \to QC$ such that $\mathcal{E}(f\circ g)=\mathcal{E}(f)\circ \mathcal{E}(f)$ {for all } $f,g\in QS$?}
\smallskip

 The answer was shown to be in the negative by Epstein and Markovic \cite{Epstein-Markovic}, their \emph{Stop Dreaming Theorem}, by employing a combination of diverse techniques (ranging from the Baire Category Theorem and intricate computations involving the affine group and earthquake functions) to obtain a beautiful proof by contradiction.  Besides the negative answer to the \emph{Dream Problem}, one could perhaps continue dreaming by replacing the $QS$ class, by some smaller group of functions. For instance, by the results of Hinkkanen \cite{Hinkkanen-book} and Markovic \cite{Markovic}, if we restrict ourselves to \emph{uniformly quasisymmetric groups}, that is, subgroups $\G\subseteq QS(\r)$ for which each member verifies the same $M$-condition, the Dream Problem has a positive solution. The reason for such extension to exist is that any uniformly quasisymmetric group $\G$ is quasi-symmetrically conjugated to some Möbius group. 
 
 On another direction, if we drop the uniform control of the quasisymmetric constant and replace $QS(\r)$ by another subgroup of $QS(\R)$, for instance the group $BLip(\R)\subsetneq QS(\R)$ of bi-Lipschitz homeomorphisms of the real line\footnote{A homeomorphism $f:\R \to\R$ is bi-Lipschitz if there is $c\geq 1$ such that $c^{-1}|x-y|\leq |f(x)-f(y)|\leq c|x-y|$ for all $x,y\in \R$}, then we have a positive result: Norton and Sullivan \cite{Norton-Sullivan} provides us with an explicit extension that does the job
    \begin{equation}\label{triang-ext}
        \mathcal{E}_{NS}(f)(z)=\frac{f(x+y)+f(x-y)}{2}+i\frac{f(x+y)-f(x-y)}{2}\mbox{ for all }z=x+iy\in\H\,.
    \end{equation}
    It is not hard to verify that $\mathcal{E}_{NS}: BLip(\R) \to QC(\H)$ and $\E_{NS}(f\circ g) = \E_{NS} f \circ \E_{NS} g$.
%\end{itemize}
These results, when combined with the negative results of Epstein and Markovic, and with the fact that there are examples of homeomorphisms $f\in QS(\r)\backslash BLip(\R)$ for which $\mathcal{E}_{NS}(f)\notin QC(\H)$ (a simple counter-example is provided by  $f(x)=x^{3}$) {leave open the following simple question: 

\begin{center}\emph{Is there any group $G$ with $BLip(\R)\subsetneq G \subsetneq QS(\R)$ such that  $\mathcal{E}_{NS}:G\to QC$? }\end{center}

Surprisingly the answer is no! To be precise, if $f:\R \to \R$ is a homeomorphism such that $E_{NS}(f)$ is in $QC(\H )$ then $f$ is bi-Lipschitz. Besides, it is in general false that quasiconformality of mappings $F:\H\to\H$ implies absolute continuity of their boundary values; this is true only in our special case. The proof is quite simple:
\begin{itemize}
	\item By hypothesis $E_{NS}(f)$ is K-quasiconformal for some $K\ge1$;
    \item The quasiconformality  ensures that $E_{NS}(f)$ is absolutely continuous on almost every horizontal and vertical line. Due to the simple nature of $E_{NS}$ this immediately implies that $f $ is absolutely continuous. We may assume $f$ to be increasing  and that $f(\pm \infty ) = \pm\infty$. Thus,  there is  $c_0>0$ such that $ | \{ s \in \R : |f^{\prime}(s) - c_0| < \epsilon \} |>0$ for all $\ep>0$. 
    \item We now claim that $f' \in L^\infty(\R)$. The proof is by contradiction: {if the claim is false, either $0$ or $+\infty$ are in the essential range of $f'$}. In the first case we choose 
$0 < \epsilon_0 < c_0/2$, let $A_0 = \{ s\in \R : |f^{\prime}(s) - c_0| < \epsilon_0\}$ be of positive measure, and for $\delta > 0$, to be chosen later, we set $B_{\delta} = \{ t\in\R : 0 < f'(t) < \delta\} $, which also has positive measure. For $(s,t) \in A_0 \times B_{\delta}$ there exist unique $x$ and $y$ such that $x-y = t,\; x+y = s$. If $\delta$ is sufficiently small we can make
$$ \left| \frac{f^{\prime}(x+y) - f^{\prime}(x-y)}{f^{\prime}(x+y) + f^{\prime}(x-y)}\right| > \frac{K-1}{K+1} $$
for $(x,y)$ on a set of positive measure, contradicting the $K$-quasiconformality hypothesis. The case when the function assumes arbitrarily large values can be treated similarly.
\end{itemize}

The curious answer to the question above lead us to speculate if there is any other extension process similar to the extension of Norton and Sullivan:}

\subsection*{Problem  (Main Question)}
\emph{Is there any other homomorphic extension ${E}:BLip(\R)\to QC(\H)$ distinct from $\E_{NS}$?}
%\end{problem}

It turns out, as our main result is going to prove, that  if one assumes the extension $E$ has the main properties that $\E_{NS}$ has, that is, $E$ is similar to $\E_{NS}$ in some way (to be defined below), then the answer is \textbf{no!}. That is, after symmetry, $E$ coincides with $\E_{NS}$

\subsection{Main Results} 
We are now about to enunciate a classification result which states (roughly) that any extension $E$ satisfying certain properties must be, after symmetries, equal to $\E_{NS}$. Thus, the smaller the domain where $E$ is defined and the larger the codomain of $E$, the stronger our results. This is why below we define certain spaces, which might not be usual to work with in the literature, but nevertheless, represent the extent which our results hold true (compare Theorem \ref{thm1} and Corollary \ref{cor1}).

We fix orientation and let $\bl$ denote the class of bi-Lipschitz increasing $C^1$-functions. That is, $f\in \bl$ if and only if $f\in C^1(\r)$ and there are $b,B>0$ such that $b\leq f'(x)\leq B$ for all $x\in \r$. Note $\bl\subset BLip(\R)$. Let $\{f_n\}$ be a sequence in $\bl$ and $f\in \bl$.   We write $f_n \tobl  f$ if $f_n\to f$ and $f_n'\to f'$ uniformly in compact sets of $\r$, and  there are $b,B>0$ such that $b\leq f_n'(x) \leq B$ for all $n\geq 1$ and $x\in \r$. Note that if $f_n \tobl  f$ then $f\in BLip(\R)$ and $f_n\to f$ uniformly in compact sets of $\R$. Let $\b$ be the class of homeomorphisms of the upper half-space $\H:=\{z=x+iy : y>0\}$.

\begin{definition} We say a map $E: \bl \to\b$ is an {\bf admissible extension} if:
\begin{enumerate}
\item[(P1)] $\lim_{y\to 0}  Ef(x+iy) = f(x)$ uniformly in compact sets of $\r$;

\item[(P2)] $E(a\id +b)(z)=az+b$ for every $a>0$ and $b\in \r$;

\item[(P3)] $E(f \circ g)=Ef \circ Eg$ for all $f,g\in\bl$;

\item[(P4)] If $f_n \tobl f$  then
$
Ef_n(z)\to Ef(z)
$
{for all} $z\in \H$.
\end{enumerate}
We write $E\in \A$.
\end{definition}

These are, in a way, the minimal conditions for an extension $E$ to satisfy the \emph{Dream Problem} with $QS(\R)$ replaced by $\bl$.  Consider the non-commutative group $G=\r\times \r_+$ with the following group operation
$$
(a,\al)\cdot (b,\be) = (a+\al b,\al\be).
$$
Observe that $(a,\al)^{-1}=(-a/\al,1/\al)$ and that $(0,1)$ is the neutral element of $G$ and that 
$$
(a,\al)\in G\mapsto a+\al z \in \aff
$$
is an isomorphism. There is an action of $G$ over $\A$ given by
\begin{align}
(a,\al)E f(x+iy) = S_{-a/\al}^{1/\al} \circ Ef \circ S^\al_a (z)
\end{align}
for any $E\in\A$, where $S^\al_a (z) =x+ay+i\al y$. Routine calculations show that
$$
(a,\al)E \in \A, \quad  (a,\al)((b,\be) E) = ((a,\al)(b,\al))E , \ \text{ and } \ (0,1)E=E,
$$
for any $E\in \A$. Using this action, we can generate an entire family of extensions in the spirit of Norton and Sullivan \cite{Norton-Sullivan}; in particular, the construction below shows directly that  $\E_{NS}$ is member of a family of transformations, which we define below. It seems that this group action was not noticed before, nor the existence of the family of transformations we define below.

\begin{definition}
Consider the two-parameter family of extensions $\E_{a,\al}:\bl \to\b$ defined for each
$f\in \bl$ by 
$$
\E_{a,\al}f(z) = (1-a/\al)f(x+a y) + (a/\al)f(x-(\al-a) y) + \frac{i}{\al}f(x+a y) - \frac{i}{\al}f(x-(\al-a) y),
$$
for $a\in \r$ and $\al> 0$. When $\al=0$ we interpret the above family of operators as taking the limiting case $\alpha\to 0$ in the above expression, which gives us the formula
$$
\E_{a,0}f(z) = f(x+ay)-ayf'(x+ay) + iyf'(x+ay).
$$
\end{definition}

It is straightforward to show that $\E_{a,\al}\in \A$. Observe that 
$$
\E_{a,\al}=(a,\al)\E_{0,1}
$$
where 
$$
\E_{0,1}f(z) = f(x) + i(f(x)-f(x-y)).
$$
Note that $\E_{1,2}=\E_{NS}$.  Observe also that $\E_{a,0}=(a,0)\E_{0,0}$, where $\E_{0,0}=f(x)+iy f'(x)$.  In fact, $\E_{a,\al}$ is not only an admissible extension, but $F(z)=\E_{a,\al}f(z)$ is always a quasiconformal map of $\H$ for $\al>0$. Indeed, quasiconformality follows from the computation
{\begin{align}\label{eq:Fquasiconf-est-alt}
\left| \frac{\partial_{\z} F(z)}{\partial_z F(z)}  \right| =  \left|\frac{1-\theta}{1-e^{i\sigma}\theta}\right| ,
\end{align}
where $\theta=f'(x-(\al-a) y)/f'(x+ay)$ and 
$$
e^{i\sigma}=\frac{(-i+a)(i+a-\alpha)}{(i+a)(-i+a-\alpha)}.
$$
Since $\theta \in [1/c,c]$ for some $c>1$ and $e^{i\sigma}=1$ only happens when $\alpha=0$, we have $\left|\frac{1-\theta}{1-e^{i\sigma}\theta}\right|<1-\ep$, hence quasiconformality follows. On the other hand, the transformation $F=\E_{0,0}f$ is not quasiconformal (assuming $f\in C^2$) since
$$
\sup_{z\in \H}\left| \frac{\partial_{\z} F(z)}{\partial_z F(z)}  \right| = \sup_{z\in \H} \frac{|f''(x)y|}{|2f'(x)+if''(x)y|} = 1
$$
when $f$ is not affine.\footnote{It is interesting to observe that such extensions are quasiconformal for $\alpha>0$ if and only if $f$ is bi-Lipschitz.}

In what follows, $C^\infty_c(\r)$ is the class of infinitely differentiable functions $\p:\r\to\r$ with compact support and $C^{1}_0(\r)$ is the class of $C^1$-functions $\p:\r\to\r$ such that both $\p$ and $\p'$ converge to zero at $\pm \infty$. It is classical that $C^{1}_0(\r)$ is a Banach space with the norm $\|\p\|_{1,\infty} = \|\p\|_{\infty}  + \|\p'\|_{\infty}$. We also let $\B(\ep)=\{\p\in C^\infty_c(\r): \|\p'\|_{\infty}<\ep\}$.

The following are the two main results of this article.

\begin{theorem}\label{thm1}
Let $E\in \A$. Assume there is $z_0\in \H$, $\ep_0\in (0,1)$ and a linear functional $\Lambda: C^\infty_c(\r) \to \mathbb{C}$ such that
\begin{equation}\label{linear_at_id}
E(\id+\p)(z_0)=z_0 + \Lambda(\p)
\end{equation}
for any $\p \in \B(\ep_0)$. Then $E=\E_{a,\al}$ for some $a\in \r$ and $\al\geq 0$.
\end{theorem}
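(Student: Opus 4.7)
The plan is to use (P3) to convert the pointwise linearity at $z_0$ into a global functional equation for $\Lambda$, to solve that equation using distribution theory, and then to bootstrap the identified equality $E=\E_{a,\al}$ from a neighbourhood of the identity out to all of $\bl$.

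Normalisation and propagation: the element $(x_0,y_0)\in G$ carries the linearity hypothesis from $z_0=x_0+iy_0$ to $i$ with a new functional $\Lambda'$, and since the family $\{\E_{a,\al}\}$ is $G$-invariant I may assume $z_0=i$. Combining (P2) and (P3) with $(\id+\p)\circ(\al\id+x)=(\al\id+x)\circ\bigl(\id+\p(\al\cdot+x)/\al\bigr)$ then yields
\[
E(\id+\p)(z)=z+y\,\Lambda(T_{y,x}\p),\qquad T_{y,x}\p(t):=\p(yt+x)/y,
\]
for every $z=x+iy\in\H$ and every $\p\in C^\infty_c(\r)$ with $T_{y,x}\p\in\B(\ep_0)$. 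Applying (P3) to $(\id+\ep\p_1)\circ(\id+\ep\p_2)$ at $z=i$ and collecting the $\ep^2$-coefficient produces the functional equation
\begin{equation}\label{eq:FE}
\Lambda(\p_1'\p_2)=\operatorname{Re}\Lambda(\p_2)\cdot\Lambda(\p_1')+\operatorname{Im}\Lambda(\p_2)\cdot\Lambda(t\p_1'),\qquad \p_1,\p_2\in C^\infty_c(\r).
\end{equation}

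Solving \eqref{eq:FE} is the main analytic hurdle. By (P4), $\Lambda$ is continuous in the $C^1$-topology, hence a distribution of order $\leq 1$; write $\Lambda=\Lambda_R+i\Lambda_I$. Plugging into \eqref{eq:FE} a $\p\in C^\infty_c$ that equals $t$ on $[-N,N]$, and testing against $\eta$ supported inside (resp.\ far outside) $[-N,N]$, gives---when $\Lambda_R,\Lambda_I$ are $\r$-linearly independent---the normalisations $\Lambda(1)=1,\ \Lambda(t)=i$ (interpreted via cutoffs) and the conclusion that $\Lambda$ has compact support. Plugging in $\p=t^2/2$ on $[-N,N]$ then yields $t\Lambda_R=C_R\Lambda_I$ and $t\Lambda_I=\Lambda_R+C_I\Lambda_I$, where $C_R+iC_I:=\Lambda(t^2)$, and hence $(t^2-C_It-C_R)\Lambda_R=0$. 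Order-counting on the (at most two) real roots of this quadratic forces $\Lambda$ into either (a) a combination of two Dirac masses at distinct real points, matching $\E_{a,\al}$ with $\al>0$, or (b) a combination of $\delta_a$ and $\delta_a'$ at a single point, matching $\E_{a,0}$. The complementary case that $\Lambda_R,\Lambda_I$ are $\r$-dependent forces $\Lambda=c_0\delta_{x_0}$; extending the propagation formula via (P4) to bi-Lipschitz perturbations and applying (P3) to the factorisation $\id+\p_0=(2\id)\circ(\id+\tilde\p_2)$ with $\tilde\p_2(x)=(\p_0(x)-x)/2$ produces a value of $E(\id+\p_0)(z)$ whose imaginary part disagrees with the direct computation, contradicting well-definedness. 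So this case does not arise.

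To finish, set $\mathcal{F}=\{f\in\bl:Ef=\E_{a,\al}f\}$. By (P2), (P3), (P4), $\mathcal{F}$ is a subgroup of $\bl$ closed under $\tobl$-limits, containing the affine maps and $\id+\B(\ep_0)$. A cutoff-plus-telescoping argument---approximating $f$ by bi-Lipschitz maps that are affine outside a compact set, factoring each such approximation as an affine map composed with an element of $\id+C^\infty_c$, and then decomposing $\id+\tilde\p$ as a finite composition $(\id+t_j\tilde\p)\circ(\id+t_{j-1}\tilde\p)^{-1}$ along a fine partition of $[0,1]$ so each factor lies in $\id+\B(\ep_0)$---shows that this subgroup is $\tobl$-dense in $\bl$, whence $\mathcal{F}=\bl$ and $E=\E_{a,\al}$. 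The principal obstacle is the distributional analysis of the third paragraph; the rest is bookkeeping around (P1)--(P4).
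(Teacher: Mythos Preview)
Your overall strategy coincides with the paper's: reduce to $z_0=i$, use (P3) on perturbations of the identity to derive a functional equation for $\Lambda$ (your \eqref{eq:FE} is exactly the paper's variational identity $\Lambda(\p'\eta_\psi)=0$ with $\eta_\psi=\psi-\Lambda_R(\psi)-\Lambda_I(\psi)\,t$), solve it by distribution theory to pin $\Lambda$ down to at most two points, and then bootstrap to all of $\bl$ by a factorisation argument. Your route to the two--point support---multiplying $\Lambda$ by a quadratic and reading off $(t^2-C_It-C_R)\Lambda_R=0$---is a pleasant variant of the paper's integration--by--parts analysis of the measure $\mu$ (which needs the extra $t^3$ comparison to kill a constant); and your homotopy factorisation $(\id+t_j\tilde\p)\circ(\id+t_{j-1}\tilde\p)^{-1}$ plays the same role as the paper's Lemma with $f_1(x)=\int_0^x f'(t)^{\alpha_1}\,dt$.

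There are, however, two genuine gaps in the ordering. First, you invoke compact support of $\Lambda$ and the normalisations $\Lambda(1)=1$, $\Lambda(t)=i$ \emph{before} establishing any structure for $\Lambda$; but $1,t,t^2\notin C^\infty_c(\r)$, and your identities $t\Lambda_R=C_R\Lambda_I$, $t\Lambda_I=\Lambda_R+C_I\Lambda_I$ require identifying $\Lambda(\p_1')$ and $\Lambda(t\p_1')$ (for the cut--off $\p_1$) with $\Lambda(t)$ and $\Lambda(t^2)$, which is circular. The paper avoids this by first extending $\Lambda$ to a bounded functional on $C^1_0(\r)$, representing it by finite measures, carrying out the structure analysis \emph{locally} on a bounded interval, and only afterwards deriving the affine normalisations from (P2); you should reorder likewise. (A quick fix in your language: keep $P:=\Lambda(\p_1')$, $Q:=\Lambda(t\p_1')$ as unknown constants depending on the cut--off, derive $[(t-P_R)(t-Q_I)-P_IQ_R]\,\Lambda_R=0$ on $(-N,N)$, and then let $N\to\infty$.) Second, your treatment of the degenerate case ``$\Lambda_R,\Lambda_I$ linearly dependent'' is broken: in the factorisation $\id+\p_0=(2\id)\circ(\id+\tilde\p_2)$ the map $\id+\tilde\p_2$ has derivative near $1/2$, not near $1$, so neither the linearity hypothesis nor the propagation formula applies to it, and no contradiction results. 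Fortunately this branch is unnecessary: once the normalisations are in hand (in the correct order), $\Lambda_R(1)=1$, $\Lambda_I(1)=0$, $\Lambda_R(t)=0$, $\Lambda_I(t)=1$ force $\Lambda_R,\Lambda_I$ to be independent, so you can simply delete that paragraph.
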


\begin{theorem}\label{thm2}
Let $E\in \A$. Assume there is $z_0\in \H$, $\ep_0\in (0,1)$ and a bounded linear functional $\Lambda: C^1_0(\r) \to \mathbb{C}$ such that
\begin{align}\label{linear_at_idweak}
{E(\id +\p)(z_0)=z_0+\Lambda(\p)} + R(\p),
\end{align}
for any $\p \in \B(\ep_0)$, where $R:\B(\ep_0)\to\cp$ is a function satisfying the following property: For any $c>0$ there is $\ep\in (0,\ep_0)$ such that if $\p \in \B(\ep)$ then
$$
|{R(\p)}| \leq c \|\p'\|_{\infty}^2.
$$
Then $\Lambda=\E_{a,\al}$ for some $a\in \r$ and $\al\geq 0$.
\end{theorem}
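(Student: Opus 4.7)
The plan is to reduce Theorem 2 to Theorem 1 by showing that the error $R$ must vanish on a sufficiently small ball, so that $\varphi\mapsto E(\id+\varphi)(z_0)-z_0$ is actually linear there. The bridge is a functional ODE derived from the group law (P3).

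First, I would set $\tilde\Lambda(\varphi):=E(\id+\varphi)(z_0)-z_0$, let $h_t(x):=\alpha(t)x+\beta(t)$ denote the affine map determined by $z_0+\tilde\Lambda(t\varphi)=\alpha(t)z_0+\beta(t)$ via (P2), and use the decomposition $\id+(t+s)\varphi=(\id+s\psi_t)\circ(\id+t\varphi)$ with $\psi_t:=\varphi\circ(\id+t\varphi)^{-1}$. Applying (P3) and rewriting $E(\id+s\psi_t)$ at $h_t(z_0)$ using (P2), one obtains
$$\tilde\Lambda((t+s)\varphi)-\tilde\Lambda(t\varphi)=s\,\Lambda(\psi_t\circ h_t)+\alpha(t)R(\widetilde{s\psi_t}),$$
with $\widetilde{s\psi_t}(x)=s\psi_t(h_t(x))/\alpha(t)$. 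The remainder is $o(s^2)$ by hypothesis, so dividing by $s$ and letting $s\to 0$ yields the exact ODE
$$\frac{d}{dt}\tilde\Lambda(t\varphi)=\Lambda(\psi_t\circ h_t),\qquad \tilde\Lambda(0)=0.$$
The essential point is that the $o(s^2)$ error disappears upon differentiation in $s$.

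Second, since the RHS of this ODE is $C^1$ in $t$ (by a bootstrap, using (P4)), the function $\tilde\Lambda(t\varphi)$ is $C^2$ in $t$. The hypothesis $\tilde\Lambda(t\varphi)=t\Lambda(\varphi)+R(t\varphi)$ with $R(t\varphi)=o(t^2)$ forces $\partial_t^2\tilde\Lambda|_{t=0}=0$, which a direct computation translates into the quadratic functional identity
$$\Lambda\bigl(\varphi'\cdot(\gamma x+\delta-\varphi)\bigr)=0\quad\text{for all }\varphi\in C_c^\infty(\r),$$
where $\gamma=\Im\Lambda(\varphi)/y_0$, $\delta=\Re\Lambda(\varphi)-x_0\gamma$, and $z_0=x_0+iy_0$.

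Third, I would polarize this identity (replacing $\varphi$ by $\varphi+\epsilon\psi$ and extracting the $\epsilon$-linear part) to obtain a bilinear functional equation for $\Lambda$. Splitting $\Lambda=A+iB$ into real and imaginary parts and representing the bounded functionals $A,B$ on $C^1_0(\r)$ via the Riesz theorem as pairs of finite Borel measures, this bilinear equation forces $A$ and $B$ to be concentrated on at most two points $p,q\in\r$. Coefficient matching on the diagonal and cross terms then pins $p=x_0+ay_0$, $q=x_0-(\alpha-a)y_0$, and the weights, giving $\Lambda=\Lambda_{a,\alpha}$ for some $a\in\r$, $\alpha\ge 0$. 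Once this is in hand, $\mathcal{E}_{a,\alpha}\in\mathcal{A}$ realizes the same $\Lambda$ with $R\equiv 0$; by Step 1 both $E$ and $\mathcal{E}_{a,\alpha}$ satisfy the same ODE at $z_0$ with identical initial data, so they agree at $z_0$ on $\id+\B(\varepsilon_0')$ for some $\varepsilon_0'>0$, and then (P2), (P3) propagate $E=\mathcal{E}_{a,\alpha}$ throughout $\bl$.

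The hard part will be Step 3: extracting from a single quadratic identity that $\Lambda$ must be a two-point evaluation requires a nontrivial algebraic analysis that genuinely uses the boundedness of $\Lambda$ on $C^1_0$, which is exactly where Theorem 2's hypothesis is stronger than one might naively expect. If the polarized identity alone proves insufficient to pin down $\Lambda$, one could extract further constraints by applying the full group law $E(f\circ g)=Ef\circ Eg$ with $f=\id+\varphi_1$, $g=\id+\varphi_2$ and matching the bilinear-in-$(\varphi_1,\varphi_2)$ terms to further rigidify the form of $\Lambda$.
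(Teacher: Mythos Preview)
Your opening line is misframed: Theorem~\ref{thm2} only asserts $\Lambda=\E_{a,\al}$, not $E=\E_{a,\al}$, and in the paper Theorem~\ref{thm1} is reduced to Theorem~\ref{thm2}, not the other way round. Your ODE step and the ensuing uniqueness argument for $E$ are extra (and would be circular if you then invoked Theorem~\ref{thm1}).

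The substantive gap is in Step~3. Your one-parameter ODE correctly yields the \emph{diagonal} identity $\Lambda\bigl(\p'(\gamma x+\delta-\p)\bigr)=0$, and polarization gives only the \emph{symmetrized} relation $\Lambda(\p'\eta_\psi)+\Lambda(\psi'\eta_\p)=0$. The paper instead composes two \emph{independent} perturbations $(\id+s\p)\circ(\id+r\psi)$, divides by $rs$, and lets $s=r\to 0$; the remainder hypothesis kills all three $R$-terms and one obtains the strictly stronger \emph{asymmetric} identity $\Lambda(\p'\eta_\psi)=0$ for all $\p,\psi$. This asymmetry is exactly what the measure argument needs: one fixes $\psi$ (e.g.\ $\psi(t)=t^2$ on an interval $I$) so that $\eta_\psi$ is a known weight, and then varies $\p$ freely to force $i\eta_\psi\,\d\mu=(\si+c_1+c_2t)\,\d t$ and eventually $\d\nu=i\d(\mu')$ off two points. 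With only the symmetrized identity, the term $\Lambda(\psi'\eta_\p)$ couples the two test functions and blocks this decoupling; you give no argument that the symmetric version suffices, and the natural attempt (choosing $\p,\psi$ with disjoint supports, or factoring $\chi=\p\psi$) does not obviously close.

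Your fallback---``apply the full group law with $f=\id+\p_1$, $g=\id+\p_2$ and match bilinear terms''---\emph{is} the paper's derivation of the asymmetric identity. So the proof can be completed, but the one-parameter ODE route does not stand on its own; the two-parameter composition is where the real content lies.
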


The reason we use the ball $\B(\ep_0)$ and not a ball in $C^1_0(\r)$, is that it makes the theorem stronger.

One way to think about both results above is that, in the first, we ask the extension to be \emph{identically linear} in a neighborhood of the identity map, while in the second, we only ask that the extension has a continuous (variational) derivative at the identity map, with a \emph{vanishing} second derivative. Indeed, the condition we ask on the remainder $R$ in Theorem \ref{thm2} can be interpreted heuristically as:
$$
DE|_{\id} \ \text{ is continuous} \quad \& \quad D^2 E|_{\id}\equiv 0. 
$$
We then proceed to show that $DE|_{\id} = \E_{a,\al}$

\begin{corollary}\label{cor1}
    Let $E:BLip(\R)\to QC(\H)$ be an extension verifying (P1), (P2) and (P3), and the following condition
   \begin{itemize}
   \item[$(P4)^*$]  If $f_n \to f$ uniformly in compact sets of $\R$, then $E f_n(z) \to E f(z)$ {for all} $z\in \H$.
\end{itemize}
    Assume also that $E$ verifies  condition \eqref{linear_at_id} or \eqref{linear_at_idweak}.  Then the conclusions of Theorems \ref{thm1} or Theorem  \ref{thm2} follow (respectively).
\end{corollary}
\begin{proof}
We have $\bl\subset BLip(\R)$ and $QC(\H) \subset \b$. Moreover, if $f_n\tobl f$ then $f_n\to f$ uniformly in compact sets. We conclude that $E \in \A$. We can now apply Theorems \ref{thm1} and \ref{thm2} respectively. 
\end{proof}

\begin{remark}\label{rem: z_0}
In Theorems \ref{thm1} and \ref{thm2}, we can always assume that $z_0=i$. Indeed, define the operators $T_{x_0}f(x)=f(x-x_0)$ and $S_{y_0}f(x)=f(y_0x)$. Let $f(x)=x+\p(x)$ for $\p\in\B(\ep_0)$. Observe that by properties (P2) and (P3) we have 
\begin{align}
Ef(i)=E(T_{x_0}S_{1/y_0}f)(z_0)  & = E[(T_{x_0}S_{1/y_0}\id) \circ (\id + y_0 T_{x_0}S_{1/y_0}\p)](z_0) \\ & = y_0^{-1}\left(E[\id + y_0 T_{x_0}S_{1/y_0}\p](z_0)-x_0\right) \\
& = \Lambda(T_{x_0}S_{1/y_0}\p)+i.
\end{align}
Notice the functional $\p\mapsto \Lambda(T_{x_0}S_{1/y_0}\p)$ still is linear.
\end{remark}

Despite many efforts, we were unable to employ our techniques to prove the following conjecture, which we believe is true:

\begin{conjecture}[{\bf The Brazilian Dream Problem}]\label{conj}
Let $E: \bl \to QC(\H)$ be an extension satisfying properties (P1), (P2), (P3) and (P4). Then 
$$
E = \E_{a,\al}
$$
for some $a\in \r$ and $\al> 0$.
\end{conjecture}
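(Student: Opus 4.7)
The plan is to reduce Conjecture \ref{conj} to Theorem \ref{thm2} by proving that every admissible $E$ with $E(\bl)\subseteq QC(\H)$ automatically satisfies the differentiability condition \eqref{linear_at_idweak}. By Remark \ref{rem: z_0} we may normalize $z_0=i$. For each $\p\in C^\infty_c(\r)$ with $\|\p'\|_\infty$ sufficiently small, let $f_t^\p\in\bl$ be the time-$t$ flow generated by $\p$, defined by $\partial_t f_t^\p=\p\circ f_t^\p$ and $f_0^\p=\id$. Then $\{f_t^\p\}_{t\in\r}$ is a one-parameter subgroup of $\bl$, and by (P3) the image $F_t^\p:=E(f_t^\p)$ is a one-parameter subgroup of $QC(\H)$.

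The first main step is to show that the infinitesimal generator $V^\p(z):=\partial_t|_{t=0}F_t^\p(z)$ exists for every $z\in\H$ and defines a continuous vector field with boundary values $V^\p|_\r=\p$ (the latter from (P1)). Continuity of $t\mapsto F_t^\p(z)$ follows from (P4) and the $\bl$-continuity of $t\mapsto f_t^\p$. Upgrading this to differentiability in $t$ should follow from the classical theory of one-parameter groups of quasiconformal homeomorphisms and their Beltrami coefficients $\mu_t^\p=\partial_{\bar z}F_t^\p/\partial_z F_t^\p$, combined with the boundary regularity forced by (P1). Setting $\Lambda(\p):=V^\p(i)$, homogeneity follows from the rescaling $f_t^{\la\p}=f_{\la t}^\p$, while additivity follows from the Baker--Campbell--Hausdorff expansion $f_t^\p\circ f_t^\psi=f_t^{\p+\psi}+O(t^2)$ in $\bl$, combined with (P3) and the local Lipschitz estimate described below. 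A density argument would then extend $\Lambda$ to a bounded linear functional on $C^1_0(\r)$.

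For the quadratic remainder, one compares $E(\id+\p)$ with $F_1^\p=E(f_1^\p)$. An ODE expansion of the flow gives $f_1^\p(x)=x+\p(x)+O(\|\p\|_{C^1}^2)$, while the second-order time expansion of the one-parameter group yields $F_1^\p(i)=i+\Lambda(\p)+O(\|\p'\|_\infty^2)$, the quadratic constant being controlled by the uniform dilatation of $F_t^\p$ on small time intervals. Combining these expansions with a local Lipschitz estimate $|E(f)(i)-E(g)(i)|\lesssim\|f-g\|_{C^1}$ for $f,g$ near $\id$ in $\bl$ would produce $E(\id+\p)(i)=i+\Lambda(\p)+R(\p)$ with $|R(\p)|\lesssim\|\p'\|_\infty^2$, verifying the hypothesis of Theorem \ref{thm2}. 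The conclusion is $E=\E_{a,\al}$ for some $a\in\r$, $\al\geq 0$, and the hypothesis $E(\bl)\subseteq QC(\H)$ rules out $\al=0$ since $\E_{a,0}$ fails to be quasiconformal.

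The main obstacle is precisely the quantitative stability of $E$ at the identity. Property (P4) provides only pointwise continuity along $\bl$-convergent sequences, whereas both the $t$-differentiability of $F_t^\p$ and the uniform Lipschitz bound on $E$ near $\id$ demand substantially more. A natural approach would be to exploit the uniform dilatation of the family $\{E(f):\|f'-1\|_\infty<\ep\}$, combined with Mori-type H\"older continuity of QC maps of $\H$ and compactness arguments on $\B(\ep_0)$, possibly via Baire-category reasoning in the spirit of Epstein--Markovic's ``Stop Dreaming'' proof. Producing an error of genuinely quadratic order $\|\p'\|_\infty^2$, as required to invoke Theorem \ref{thm2}, rather than merely $o(\|\p'\|_\infty)$, is the delicate point where genuinely new techniques seem to be needed, which is likely why the authors could not close the argument.
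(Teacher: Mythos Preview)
The paper contains no proof of this statement: Conjecture \ref{conj} is explicitly presented as open (``despite many efforts, we were unable to employ our techniques to prove the following conjecture''). So there is nothing to compare your proposal against; what you have written is a strategy toward an unsolved problem, and you yourself flag in the final paragraph that it does not close.

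Your outline is a natural one, and the obstruction you isolate is genuine. Let me make two of the gaps more concrete. First, even the existence of the generator $V^\p(z)=\partial_t|_{t=0}F_t^\p(z)$ is not secured by what you invoke. Property (P4) gives only pointwise continuity of $t\mapsto F_t^\p(z)$, and there is no general theorem guaranteeing that a continuous one-parameter subgroup of $QC(\H)$ (or of $\mathrm{Hom}(\H)$) is differentiable in $t$; the ``classical theory of one-parameter groups of quasiconformal homeomorphisms'' you appeal to does not supply this without additional regularity of the Beltrami family $t\mapsto\mu_t^\p$, which in turn would require quantitative control on $E$ that (P1)--(P4) do not provide. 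Second, both the additivity of $\Lambda$ via Baker--Campbell--Hausdorff and the passage from $E(f_1^\p)$ to $E(\id+\p)$ rest on the local Lipschitz bound $|E(f)(i)-E(g)(i)|\lesssim\|f-g\|_{C^1}$ near $\id$. This is strictly stronger than (P4) and is essentially equivalent to the differentiability you are trying to establish; assuming it is circular.

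Finally, even if one obtained a first-order expansion $E(\id+\p)(i)=i+\Lambda(\p)+o(\|\p'\|_\infty)$, Theorem \ref{thm2} requires the sharper remainder $|R(\p)|\le c\|\p'\|_\infty^2$ with $c\to 0$ as $\|\p'\|_\infty\to 0$ (a vanishing second derivative at $\id$, not merely a bounded one). Your second-order flow expansion would at best yield a bounded quadratic remainder, which is still short of the hypothesis of Theorem \ref{thm2}. This is precisely the gap between what one can plausibly extract from the group structure plus quasiconformality and what the paper's variational machinery needs, and it is the reason the conjecture remains open.
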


\subsection{About the proof of the main results}
The proof of our main results will take several steps and, besides using classical techniques, they might be unusual for this field, so we will make an effort to explain them as best as possible. 
\begin{itemize}
    \item [(i)] We will obtain, using a limiting process and some classical arguments, that any $E\in \A$ admits a representation at the identity in terms of two Radon measures $\mu$ and $\nu$ (see \eqref{Enewform}), to be  specified later;
    \item [(ii)] Later on, using properties (P2) and (P3) together with some perturbations of the identity in $\bl$, we get that $\mu$ and $\nu$ must satisfy a functional identity (see \eqref{eq:varidentity}), which lead us to conclude (crucially) that $\mu$ and $\nu$ have to be supported in at most $2$ points;
    \item  [(iii)] Finally, after several considerations on the equations, the support and  weights that the measures $\mu$ and $\nu$ are allowed to have, we will conclude that  $E=\E_{a,\al}$.
\end{itemize}
In the third section we will give a walk-through explanation of the main results assuming that the extension is itself linear: this captures many of the important steps and serves to motivate the general case. 
%\newpage

\subsection{Organization} In {Section \ref{sec2}} we will show how to reduce the proof of the {Theorem \ref{thm1}} to the proof of {Theorem} \ref{thm2};
     In {Section} \ref{sec3} we will sketch a proof of a special case of our results (assuming that the extension is linear) in order to motivate the general line of reasoning; In Section \ref{sec4} we present the full proof; Finally in {Section} \ref{sec5} we will collect some remarks, comments and questions about such extensions .

\section{Reduction of Theorem \ref{thm1} to Theorem \ref{thm2}}\label{sec2}

The lemma below seems to be known, though perhaps not in this exact form. Since we were unable to find a direct reference for it, we have included a proof for clarity.

\begin{lemma}\label{lem}
Let $E_1,E_2\in \A$ and assume there is $\ep_0 \in (0,1)$ such that $E_1(f)=E_2(f)$ for all $f\in \bl$ such that $\|f'-1\|_\infty< \ep_0$. Then {$E_1=E_2$}.
\end{lemma}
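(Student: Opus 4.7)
The plan is to leverage the homomorphism property (P3) to propagate the local equality of $E_1$ and $E_2$ near the identity to the whole class $\bl$. Concretely, it suffices to show that every $f\in \bl$ can be written as a finite composition $f = g_n\circ g_{n-1}\circ\cdots\circ g_1$ with each $g_i\in\bl$ satisfying $\|g_i'-1\|_\infty <\ep_0$; then by hypothesis $E_1(g_i)=E_2(g_i)$ for every $i$, and (P3) iterated gives
\[
E_1(f)=E_1(g_n)\circ\cdots\circ E_1(g_1)=E_2(g_n)\circ\cdots\circ E_2(g_1)=E_2(f).
\]

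To produce such a decomposition I would linearly interpolate between $\id$ and $f$: for $t\in[0,1]$ set $f_t=(1-t)\id+tf$, so that $f_t'(x)=1+t(f'(x)-1)$ is pinched between $\min(1,b)$ and $\max(1,B)$ uniformly in $t$. Hence each $f_t$ belongs to $\bl$ and is invertible. Given a partition $0=t_0<t_1<\cdots<t_n=1$ of $[0,1]$, define
\[
g_i := f_{t_i}\circ f_{t_{i-1}}^{-1}, \qquad i=1,\dots,n,
\]
which is $C^1$ and bi-Lipschitz by the chain rule, and satisfies the telescoping identity $f=g_n\circ g_{n-1}\circ\cdots\circ g_1$.

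The required smallness of $\|g_i'-1\|_\infty$ follows from a direct computation: with $x=f_{t_{i-1}}^{-1}(y)$,
\[
g_i'(y)-1 = \frac{f_{t_i}'(x)}{f_{t_{i-1}}'(x)}-1 = \frac{(t_i-t_{i-1})(f'(x)-1)}{1+t_{i-1}(f'(x)-1)}.
\]
Using $b\le f'\le B$ and $f_{t_{i-1}}'\ge\min(1,b)>0$, one gets a uniform bound $\|g_i'-1\|_\infty \le C(t_i-t_{i-1})$, where $C$ depends only on $b$ and $B$. Choosing the mesh of the partition smaller than $\ep_0/(2C)$ makes every $g_i$ lie in the prescribed neighborhood of the identity, completing the argument.

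The main technical point is just this uniform estimate on $\|g_i'-1\|_\infty$, which requires carefully exploiting both the upper bound $B$ and the positive lower bound $b$ on $f'$; once that is in place, the reduction is purely formal via (P3). No use of (P1) or (P4) is needed, which is natural since the hypothesis already concerns equality of the extensions as maps on $\H$.
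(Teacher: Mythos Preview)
Your argument is correct, and it takes a genuinely different route from the paper's. Both proofs reduce to the same factorization claim---every $f\in\bl$ is a finite composition of maps with $\|g_i'-1\|_\infty<\ep_0$---and then conclude via (P3). The paper builds the factors by a ``fractional power'' trick: writing $f_1(x)=\int_0^x f'(t)^{\alpha_1}\,dt$ with $\alpha_1=\log(1+\ep)/\log L$ so that $f_1'=(f')^{\alpha_1}$ is pinched in $[(1+\ep)^{-1},1+\ep]$, then iterating on $g_1=f\circ f_1^{-1}$, whose Lipschitz constant drops by a factor $1+\ep$ at each step. Your linear interpolation $f_t=(1-t)\id+tf$ with $g_i=f_{t_i}\circ f_{t_{i-1}}^{-1}$ is more elementary and arguably more transparent: the telescoping is immediate, and the single estimate $|g_i'-1|\le (t_i-t_{i-1})\max(B-1,1-b)/\min(1,b)$ falls out of one line of algebra, with no need to first translate to $f(0)=0$. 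The paper's construction has the mild conceptual advantage that each factor $f_j$ is built intrinsically from $f'$ (so the number of factors is governed by $\log L/\log(1+\ep)$ in a visible way), but for the purposes of this lemma your approach is at least as clean and uses exactly the same hypotheses.
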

\begin{proof}
It is enough to show that for every $f \in \bl$ there is $f_1,f_2,...,f_N \in \bl$ such that $\|f_j'-1\|_\infty <\ep_0$ for each $j$ and $f=f_1\circ f_2\circ \cdots \circ f_N$. Given $f \in \bl$, we know that there is $L\ge1$ such that $L^{-1}\le\|f'\|_\infty\le L$. We can assume that $L>1+\ep_0$, otherwise there is nothing to prove. Moreover, we can also assume that $f(0)=0$ (the general case being reduced to this one by a translation). Choose $\ep<\ep_0$ so small that $\ep_0>\ep/(1-\ep)$ and define $\alpha_{1}=\log(1+\ep)/\log L$. If we consider 
$$
f_{1}(x)=\int_{0}^{x}f'(t)^{\alpha_1}dt,
$$
we see by the construction of $f_1$ that $(1+\ep)^{-1}\le\|f_{1}'\|\le 1+\ep$ what gives, in particular, that $\|f_{1}'-1\|<1+\ep_0$. Define $g_{1}=f\circ f_{1}^{-1}$ and observe that by the chain rule $L_{1}^{-1}\le\|g_{1}'\|_\infty\le L_{1}$, where $L_1=L/(1+\ep)$. Repeat this process until $L_{N}=L/(1+\ep)^{N}<(1+\ep_0)$ to obtain a decomposition $f=f_{N+1}\circ f_{N}\circ\cdots\circ f_1$ (with $f_{N+1}=g_{N}$) where $f_{j}$ verifies $\|f_j'-1\|_\infty <\ep_0$
\end{proof}

\begin{proof}[\bf Proof of Theorem \ref{thm1}] This proof is done via classical approximation arguments to show that $\Lambda$ extends to $C^1_0(\r)$ and it is bounded (by the Uniform Boundedness Principle). This way, we can apply Theorem \ref{thm2}. By Remark \ref{rem: z_0} we can assume that $z=i$. We claim that $\Lambda$ extends to a bounded linear functional on $C^1_0(\r)$ with respect to the norm $\|\cdot \|_{1,\infty}$. To this end, let $\eta \in C^\infty_c(\r)$ with $1\geq \eta\geq 0$, $\int \eta = 1$, $\supp(\eta)=[-1,1]$, $\eta (t)=1$ for $|t|<1/4$, and let $\eta_T(x) = T\eta (T x)$ be a standard approximation of identity for $T>0$. Consider the linear functionals
$
\Lambda_T : C^1_0(\r) \to \cp 
$
defined by
$$
\Lambda_T(\p) = \Lambda((\p * \eta_T)T \eta_{1/T}).
$$
We first claim that $\Lambda_T$ is a bounded functional on $C^1_0(\r)$ for each fixed $T>0$. Indeed, if $\p_n\xrightarrow{C^1_0(\r)} \p$ then 
$$
(\p_n * \eta_T)\eta_{1/T}T\to(\p * \eta_T)\eta_{1/T}T \quad \text{in } \ C^1_0(\r).
$$
Let $M>\ep_0^{-1}\sup_{n} \|[(\p_n * \eta_T)\eta_{1/T}T]'\|_{\infty}$,  and notice $\id + (\p_n * \eta_T)\eta_{1/T}T/M \tobl \id + (\p * \eta_T)\eta_{1/T}T/M$. We can now apply \eqref{linear_at_id} and property (P4) to obtain
\begin{align*}
\lim_{n\to\infty} \Lambda_T(\p_n) = \lim_{n\to\infty} M\Lambda_T(\p_n/M) & = \lim_{n\to\infty} M [ E(\id + (\p_n * \eta_T)\eta_{1/T}T/M)(i)-i]  \\ & = M [ E(\id + (\p * \eta_T)\eta_{1/T}T/M)(i)-i] \\ &=  \Lambda_T(\p).
\end{align*}
We concluded that not only $\La_T$ is bounded, but the following representation holds true
$$
\La_T(\p) =  M [ E(\id + (\p * \eta_T)\eta_{1/T}T/M)(i)-i],
$$
for any $M>\ep_0^{-1} \|[(\p * \eta_T)\eta_{1/T}T]'\|_{\infty}$. Now let $\wt{\Lambda}: C^1_0(\r)\to\cp$ be defined by
$$
\wt{\Lambda}(\p) = \lim_{T\to\infty} {\Lambda}_T(\p).
$$
Again, since we have $(\p * \eta_T)\eta_{1/T}T \xrightarrow{C^1_0(\r)} \p$  and $\id+(\p * \eta_T)\eta_{1/T}T/M \tobl \id+\p/M$ as $T\to\infty$, for any $\p\in C^1_0(\r)$, the same procedure now shows that
$
\wt{\Lambda}(\p) =  M [ E(\id + \p/M)(i)-i]
$
for any $\p \in C^1_0(\r)$ and $M>\ep_0^{-1} \|\p'\|_{\infty}$. By the Uniform Boundedness Principle, $\wt \Lambda$ is a  bounded functional on $C^1_0(\r)$ and 
 $$
E(\id + \p)(i)=i+\wt{\Lambda}(\p) 
$$
for any $\p \in C^1_0(\r)$ with $\|\p'\|_{\infty} < \ep_0$.

We can now apply Theorem \ref{thm2} to obtain that $\wt \Lambda=\E_{a,\al}$, for some $a\in \r$ and $\al\geq 0$. The argument above shows that $E(\id +\p)=\E_{a,\al}(\id +\p)$ if $\p\in C_0^1(\R)$ is such that $\|\p'\|_{\infty}<\ep_0$. Let now $\psi_T\in C^\infty_c(\r)$ be such that $\psi_T(x)=1$ for $-T<x<T$, $\|\psi_T\|_\infty\leq 1$, $\supp(\psi_T)\subset [-2T,2T]$ and $|\psi_T'(x)|\leq 1/T$ for $T\leq |x|\leq 2T$. Let $f\in \bl$ be arbitrary such that $\|f'-1\|_\infty<\ep_0/10$, and consider $f_T = \id + (f-\id)\psi_T$. As before, we have $f_T \tobl f$. Since $(f-\id)\psi_T \in C^1_0(\r)$ and $\|f_T'-1\|_\infty < \ep_0$ for large $T$, we obtain $E(f_T)=\E_{a,\al}(f_T)$ for large $T$, which, by property (P4), implies that $E(f)=\E_{a,\al}(f)$. The result now follows from Lemma \ref{lem}.
\end{proof}
%\vspace{-9mm}

\section{Sketch of the proof of Theorem \ref{thm2} in the linear case}\label{sec3}

In order to make the proof of the main theorem easier to understand we will start presenting a sketch for the case of linear extensions.
The strategy of the proof can be stated as follows: (A) Obtain a good representation formula for the extension;
    (B) Use this formula to obtain a functional equation that should be verified by our extension;
    (C) Perform a variational analysis  in this functional equation in order to obtain extra information about the structure of the supporting set of our extension;
    (D) Combine the information contained in the previous steps to obtain our rigidity result. 

It is important to observe that if we start assuming that the extensions are linear, that is, if we assume that there are Radon measures $\{\mu_{y}\}_{y>0}$ such that $\int_\r  (1+|t|)\d|\mu_y|(t)<\infty$ for all $y>0$ and
$$
E(f)(z)=(f*\mu_{y})(x)
$$
for $z=x+iy\in\cp$, the proof is much easier to digest. The admissibility conditions of the class $\A$ reduce to:
\begin{enumerate}
        \item[(L1)]  $\lim_{y\to 0}  f*\mu_y(x) = f(x)$;
        \item[(L2)]  $(a\id+b) * \mu_y(x)=a(x+iy)+b$;
        \item[(L3)]  $(f \circ g)*\mu_y=(f *\mu_{\im [g*\mu_y](x)})(\re [g * \mu_y](x))$ for any $f,g\in\bl$;
\end{enumerate}

\noindent {\bf Step 1.} First we claim that $\d\mu_y(x)=\d\mu(x/y)$. Indeed, letting $f(x)=x+\p(x)$ with compactly supported smooth $\p$ such that $\|\p'\|_\infty<1$, (L3) implies that
$$
ax+aiy + \p(a\cdot)*\mu_y(x) = (x+iy+\p*\mu_y(x))\circ (ax+aiy) = ax + iay + \p*\mu_{ay}(ax)
$$
for $a>0$. This implies that for any such test function $\p$ we have
$$
\int_\r \p(ax-t)\d\mu_y(t/a) = \int_\r \p(ax-t)\d\mu_{ay}(t).
$$
Replacing $x$ by $x/a$ and $y=1
$, this shows that $\d\mu_{1}(t/a)=\d\mu_a(t)$.
Note also that (L2) is now equivalent to 
$$
1=\int_\r \d\mu(t) \text{ and } -i= \int_\r t \d\mu(t).
$$
If we write $\mu=\al-i\be$, then we must have $\int_\r (1,t)\d\al(t)=(1,0)$ and $\int_\r (1,t)\d\be(t)=(0,1)$.

\noindent {\bf Step 2.} Considering $f_j(x)=x+\p_j(x)$ for $j=1,2$ and, as before, $\p_j\in C_c^\infty$ with $\|\p_j'\|_\infty<1$, just note that $E f_j = x+\p_j*\al_y(x) + i(y-\p_j*\be_y(x))$ and $f_1\circ f_2 = x+\p_2(x)+\p_1(x+\p_2(x))$. From $Ef_1 \circ Ef_2 = E (f_1\circ f_2)$ we obtain
\begin{align*}
& x+\p_2*\al_y(x)+\p_1*\al_{y-\p_2*\be_y(x)}(x+\p_2*\al_y(x)) + i(y-\p_2*\be_y(x) -\p_1*\be_{y-\p_2*\be_y(x)}(x+\p_2*\al_y(x))) \\
& = x+\p_2*\al_y(x) + \p_1\circ f_2 *\al_y(x) + i(y-\p_2*\be_y(x)-\p_1\circ f_2 *\be_y(x)).
\end{align*}
After some simple computations, this reduces to
\begin{align*}
\p_1*\ga_{y-\p_2*\be_y(x)}(x+\p_2*\al_y(x)) & = \p_1\circ f_2 *\ga_y(x)
\end{align*}
for any $\ga$ which is a linear combination of $\al$ and $\be$. \\

\noindent {\bf Step 3.} Now replace $\p_2$ by $s\p_2$ and differentiate at $s=0$ to obtain
$$
\int_\r \p_1'(x-ty)\p_2*(\al_y+t\be_y)(x)\d\ga(t)  = \int_\r \p_1'(x-ty)\p_2(x-ty)\d\ga(t).
$$
Put $x=0$ and $y=1$ to conclude that 
$$
\int_\r \p_1'(t)\p_2*(\al-t\be)(0)\d\ga(-t)  = \int_\r \p_1'(t)\p_2(t)\d\ga(-t).
$$
This implies that there is $c\in \r$ such that $\p_2(-t)=\p_2*(\al+t\be)(0) + c$ for $t\in\supp(\ga)$, that is, $\p_2$ is linear in $\supp(\ga)$. Since $\p_2$ is essentially an arbritary test function,  we conclude that  $\#\supp(\ga)\leq 2$.

\noindent {\bf Step 4.} Finally, we find that the measures should be Dirac masses supported at two different points and the constraints in the problem will force that $E=E_{a,\al}$.

\section{Proof of Theorem \ref{thm2}} \label{sec4}
\begin{proof}
\noindent {\bf Step 1 (Integral representation).}  By Remark \ref{rem: z_0} we can assume $z_0=i$.
%Note that condition \eqref{linear_at_idweak} is equivalent to
%$$
%{E(\id +\p)(i)=i+\Lambda(\p)} + R(\p)
%$$
%for $\p\in \B(\ep_0)$, where $R:\B(\ep_0)\to \cp$ is a function such that $\lim_{n} \frac{|R(\p_n)|}{ \|\p_n'\|_\infty^2} =0$ for any sequence $\p_n\in \B(\ep_0)$ such that $ \|\p_n'\|_\infty\to 0$.  Note that by (P4), the reminder $R:\B(\ep_0)\to \cp$ is also continuous in the $C_0^1(\r)$ topology. 
A routine application of  Riesz's Representation Theorem (the vector-valued form) shows that any bounded linear functional $L: C^1_0(\r)\to \cp $ is given\footnote{Integration by parts show that the measures $\nu$ and $\mu$ are not unique. For instance, replacing $(\d\nu,\d\mu)$ by $(\d\nu+idw,\d\mu+w\d t)$ would represent the same functional.} by
$$
L(\p) = \int \p \d\nu +i \int \p' \d\mu,
$$
where $\nu$ and $\mu$ are complex-valued measures of finite total variation.
In particular, since $\Lambda$ is bounded in $C_0^1(\r)$, we have that
\begin{align}\label{Enewform}
E(\id+\p)(i) = i+  \int_\r \p\d\nu + i\int_\r \p'\d\mu + R(\p) \quad \text{for all } \ \p\in \B(\ep_0).
\end{align}
Properties (P2) and (P3) now imply that for $z=x+iy$ we have
\begin{align}\label{eq:genid}
E(\id+\p)(z) 
% & = E((\id+\p) \circ (x+y\id) )(i) = x + y E(\id +y^{-1}\p(x+y \cdot)(i)  \\
& = z + \int_\r \p(x+ty)\d\nu(t) + iy\int_\r \p'(x+ty)\d\mu(t)  + y R(\p_{z})
\end{align}
for $\p\in \B(\ep_0)$, where we define $\p_{z}(t):=y^{-1}\p(x+y t)$.

\noindent {\bf Step 2 (Functional equation derivation).} Now let $\p,\psi\in C^\infty_c(\r)\setminus \{0\}$ and consider
$f_s(x)=x+s\p(x)$ and $g_r(x)=x+r\psi(x)$ for $r,s\in (-\delta,\delta)$, with $\delta=\tfrac{\ep_0}{10\max\{\|\p'\|_\infty,\|\psi'\|_\infty\}}$. Property (P3) implies the identity $E(f_s\circ g_r)(i)=E(f_s)(E(g_r)(i))$, where $f_s\circ g_r(x)=x+r\psi(x)+s\p\circ g_r(x)$. We obtain
\begin{align*}
& E(f_s\circ g_r)(i)  \\ & = i+ \int_\r [r\psi(t)+s\p\circ g_r(t)]\d\nu(t) + i\int_\r [r\psi'(t)+s\p'\circ g_r(t)(1+r\psi'(t))]\d\mu(t) + R(r\psi+s\p\circ g_r)\\ & =  Eg_r(i)+  s\int_\r \p(\re[ Eg_r(i)(1-it)])\d\nu(t) +is \im[ Eg_r(i)]\int_\r \p'(\re[Eg_r(i)(1-it)])\d\mu(t)  \\ & \quad + \im[ Eg_r(i)] R(r\p_z)\\
& = E(f_s)(Eg_r(i)).
\end{align*}
Using that $Eg_r(i) = i + r\int_\r \psi\d\nu + ir\int_\r \psi'\d\mu + R(r\psi)$ we obtain
\begin{align*}
\frac{R(r\psi)-R(r\psi+s\p\circ g_r)-\im[ Eg_r(i)] R(r\p_z)}{rs} = & \int_\r \tfrac1{r}[\p\circ g_r(t)-\p(\re[ Eg_r(i)(1-it)])]\d\nu(t) \\  &   \quad + i\int_\r \tfrac1{r}[\p'\circ g_r(t)(1+r\psi'(t)) \\ & \quad \quad  - \im[ Eg_r(i)]\p'(\re[Eg_r(i)(1-it)])]\d\mu(t).
\end{align*}
We can now set $s=r\to 0$ and use the condition \eqref{linear_at_idweak} to obtain that
\begin{align*}
\int_\r \p'(t)[\psi(t) - (A+Bt)]\d\nu(t) + i\int_\r [\p''(t)\psi(t) + \p'(t)\psi'(t) - \p''(t)(A+Bt) - B\p'(t)]\d\mu(t)  =  0,
\end{align*}
where, if we write $\nu=\nu_1+i\nu_2$ and $\mu=\mu_1+i\mu_2$, we have $A=A(\psi)=\int \psi \d\nu_1-\int \psi' \d\mu_2$ and $B=B(\psi)=\int \psi \d\nu_2+\int \psi' \d\mu_1$. An equivalent form of the last equation is
\begin{align}\label{eq:varidentity}
 \int_\r \p'\eta\d\nu + i\int_\r (\p'\eta)'\d\mu =0,
\end{align}
where $\eta(t)=\eta_\psi(t)=\psi(t)-A(\psi)-B(\psi)t$.
%\smallskip

\noindent {\bf Step 3 ($\mu$ and $\nu$ are supported in two-points, modulo integration by parts).} Let now $I$ be a bounded open interval centered at the origin.  Select $\psi \in C^\infty_c(\r)$ such that $\psi(t)=t^2$ for $t\in I$ and $\supp (\psi) \subset 2I$. Let $\Omega=\{\eta_\psi \neq 0\}$, and note that there are $u,v \in \r$ such that $I\setminus \{u,v\} \subset \Omega \cap I$. We claim that $\mu$ is absolutely continuous on $\Omega'=I\setminus \{u,v\}$. Indeed, if we let
$
\d\si = \eta \d\nu +i\eta'\d\mu,
$
then identity \eqref{eq:varidentity} shows that
$$
0 =  \int_\r \p'\d\si + i\int_\r \p''\eta \d\mu = \int_\r \p''(-\si\d t + i\eta\d\mu)
$$
for all $\p\in C^\infty_c(\r)$. Hence there are constants $c_1=c_1(\psi)$ and $c_2=c_2(\psi)$ such that
$i\eta\d\mu = \si\d t + (c_1+c_2t)\d t$. Thus $\mu\in C^1(\Omega')$ and $\mu'(t) = -i (\si(t) + c_1 + c_2t)/\eta(t)$ is of locally bounded variation in $\Omega'$. A calculation shows that
$$
\d\nu-i\d(\mu') = -\frac{c_2}{\eta} \d t
$$
on $\Omega'$. We claim that $c_2=0$. Indeed, since the left hand side above is independent of $\psi$, if $c_2(\psi)\neq 0$ we can select another similar function $\psi_0$, but now with $\psi_0(t)=t^3$ in $I$, to conclude that 
$$
\frac{{t^3-A(\psi_0)-B(\psi_0)t} }{t^2-A(\psi)-B(\psi)t} = \frac{c_2(\psi_0)}{c_2(\psi)}
$$
for all $t\in I$, except, possibly, at $5$ points. But this is impossible. We obtain that 
$\d\nu=i\d(\mu')$ in $\Omega'$. Since $I$ is arbitrary, we conclude that there are $2$ points $u$ and $v$ such that $$\d\nu=i\d(\mu')\quad \text{in} \quad \r\setminus \{u,v\}.$$ This is equivalent to the existence of complex numbers $z_j$ for $j=1,2,3,4$ such that 
\begin{align*}
\d \nu &= z_1\delta_u + z_2\delta_v + i\d(\kappa') \\
\d\mu & = z_3\delta_u + z_4\delta_v + \kappa'\d t
\end{align*}
 for some $\kappa:\r\to\cp$ of class $C^1$, with $\kappa'$ of bounded total variation. Identity \eqref{eq:genid} now shows that 
\begin{align}\label{Enewfunc}
&E(\id+\p)(z)-z  = z_1\p(x+uy) + z_2\p(x+vy) + iy z_3\p'(x+uy) + iyz_4\p'(x+vy) + yR(\p_z)
\end{align}
for all $\p\in \B(\ep_0)$.

\noindent {\bf Step 4 ($\mu$ and $\nu$ have supports and weights related by a linear equation).}  Let $$\E(f)(z) := z_1f(x+uy) + z_2f(x+vy) + iy z_3f'(x+uy) + iyz_4f'(x+vy).$$ Note that $\E|_{C^1_0(\r)}=\Lambda$ and that
$$
E(\id+\p)(z) = z+\E(\p) + yR(\p_z)
$$
for $\p\in \B(\ep_0)$. We claim that
\begin{align}\label{eq:lineareq}
1 & =z_1 +z_2  \\
i & = z_1u + z_2v+iz_3+iz_4.
\end{align}
Notice the above equations are equivalent to $\E(a\id+b)(z)=az+b$ for any $a, b \in \r$, which implies that $E(\id+\p)(z) = \E(\id+\p) + yR(\p_z)$ for $\p\in \B(\ep_0)$. To this end, first notice that by repeating the approximation procedure employed in the proof of Theorem \ref{thm1}, we obtain that for any $f\in \bl$ with $\|f'-1\|_{\infty}<\ep_0/10$ there is $f_n \in \id + \B(\ep_0)$ such that $f_n \tobl f$ as $n\to \infty$.  By condition \eqref{linear_at_idweak}, there is $r_0>0$ such that
$$
|\frac{{E(\id+r(f_n-\id))(z)-z}}{r} -\E(f_n-\id)(z)| = |r^{-1}yR(r(f_n-\id)_z)| \leq ry
$$
if $r \in (0,r_0)$, where $r_0$ is independent of $n$, $z$ and $f$. Letting $n\to\infty$ we conclude that
\begin{align}
|\frac{{E(\id+r(f-\id))(z)-z}}{r} -\E(f-\id)(z)| \leq r y.
\end{align}
Letting $f(x)=(1+\theta)x+b$ for $b\in \r$ and $|\theta| < \ep_0/10$, and using property (P2), we obtain
$$
|\theta z+b -\E(\theta\id+b)(z)| \leq r y.
$$
Finally, letting $r\to 0$ we obtain $\E(\theta\id+b)(z) = \theta z+b$. The desired equations follow.

\noindent {\bf Step 5 (We conclude that $\Lambda={\E_{\max(u,v),|u-v|}.}$).} Assume first that $u=v$. In this case we deduce straightforwardly that 
$$
\E f(z)=f(x+uy)+y(i-u)f'(x+uy) = \E_{u,0}f(z).
$$
Assume now that $u\neq v$. We claim that $z_3=z_4=0$. If this is the case,  equations $z_1+z_2=1$ and $uz_1+vz_2=i$ immediately imply that 
$$
\E=\E_{\max(u,v),|u-v|}.
$$
To this end, going back to equation \eqref{eq:varidentity}, we conclude that 
$$
\p'(u)(z_1\eta(u)+iz_3\eta'(u)) + \p'(v)(z_2\eta(v)+iz_4\eta'(v))+iz_3\p''(u)\eta(u) + iz_4\p''(v)\eta(v) =0
$$
for all $\p,\psi\in C^\infty_c(\r)$ where $\eta=\psi-A-B\id$. Since $u\neq v$, we can prescribe the values $\p'(u), \p'(v), \p''(u), \p''(v)$ freely. Thus, the above equation can only hold if $z_3\eta(u)=z_4\eta(v)=0$ for all $\psi$. Observe that if we let $z_j=a_j+ib_j$ then
\begin{align*}
\begin{bmatrix}
-\eta(u) \\
-\eta(v)
\end{bmatrix} = 
\begin{bmatrix}
a_1+b_1u -1 & a_2+b_2u & -b_3+a_3u & -b_4+a_4u\\
a_1+b_1v -1 & a_2+b_2v & -b_3+a_3v & -b_4+a_4v
\end{bmatrix}
\begin{bmatrix}
\psi(u)\\
\psi(v)\\
\psi'(u)\\
\psi'(v)
\end{bmatrix}.
\end{align*}
We claim that it is not possible that a row in the matrix above is identically zero. For instance, if the second row is zero then $b_3+b_4=a_3v+a_4v$, but the second equation in \eqref{eq:lineareq} implies that $b_3+b_4=a_1u+a_2v$ and $b_1u+b_2v+a_3+a_4=1$. These together imply that  $a_1u+a_2v=v(1- b_1u-b_2v)$. The first equation in \eqref{eq:lineareq} is $a_2=1-a_1$ and $b_2=-b_1$.  Since $a_1+b_1v=1$ we get  $a_2=b_1 v=-b_2 v$. We conclude that  $a_1u=v- b_1vu=v-(1-a_1)u$. Hence, $u=v$; a contradiction. We conclude that $z_3=z_4=0$ as desired. This finishes the proof.
\end{proof}

{\section{Final Remarks}\label{sec5}

\subsection{Alternative version of the condition (P2) - version 1}

We start this final section noting that under condition (P1), condition (P2) is equivalent to:
\begin{enumerate}
\item[(P2*)] $E(f)$ is analytic whenever $f$ is an affine;
\end{enumerate}
Clearly (P2) implies (P2*). In the other direction, let $f(x)=ax+b$ and $F(z)=E f(z)$. Since $\im F> 0$, Poisson representation implies the existence of a finite measure $\mu$, satisfying $$\int_{\r} (1+t^2)^{-1}\d\mu(t)<\infty\,,$$ such that
$$
F(z)=pz+c + \int_\r \frac{1+tz}{t-z}\frac{\d\mu(t)}{1+t^2},
$$
for some $c\in \r$ and $p\geq 0$. We also have that
$$
\mu(t_1)-\mu(t_2) = \lim_{y\to 0} \int_{t_2}^{t_1} \Im F(t+iy)\d t,
$$
whenever $t_1<t_2$ are points of continuity of $\mu$. Since $F(t+iy)\to f(t)$ uniformly in compact sets as $y\to 0$, and $f$ is real-valued, we conclude that $d\mu\equiv 0$, hence $p=a$ and $c=b$, which shows  $F(z)=az+b$.

\subsection{Alternative version of the condition (P2) - Version 2} Other useful observation is that, reasoning in the same way as Epstein and Markovic \cite{Epstein-Markovic}, we could use the rigidity theorem for uniformly quasiconformal groups (obtained by Sullivan \cite{Sullivan} and Tukia \cite{Tukia}), in order to find a quasiconformal mapping $F:\cp\to\cp$ that is the identity on the boundary, and that conjugates the affine group $\aff$ and its image $E({\rm Aff}(\r))$. More precisely, 
    $$
    (F\circ E(\alpha Id+a)\circ F^{-1})(z)=\alpha z+a\,.
    $$
    for any $\alpha>0$ and $a\in\R$.
Since we know that $E({\rm Aff}(\r))$ is uniformly quasiconformal (exploring the Baire theorem for mappings close to the identity), one could simply drop condition (P2) in Conjecture \ref{conj}. We did not use this formulation in first place because the conjugation by $F:\cp\to\cp$ could affect conditions \eqref{linear_at_id} and \eqref{linear_at_idweak}, which are crucial for our techniques to work.

\subsection{Cauchy problem for Norton-Sullivan like extensions}
Observe that each map $f \mapsto \E_{a,\al}f$ solves a partial differential equation (other than the Beltrami equation):  if $f\in C^{2}(\R)$ then $F(x,y)=\E_{a,\al}(f)(x+iy)$ solves the following Cauchy problem:
\begin{equation} 
    \begin{cases}
      &{\rm Tr}(A\,{\rm Hess}(F)) =0\\
      &F(x,0)=f(x) \\
      &\partial_y F(x,0) =if'(x)
    \end{cases}   
    \quad \quad \text{with} \quad \quad A = 
    \begin{bmatrix}
    (\al-a)a & \tfrac12 (2a-\al) \\
    \tfrac12 (2a-\al) & -1
\end{bmatrix}.
\end{equation}
Note that for $(a,\al)=(1,2)$, that is, when $\E_{1,2}=\E_{NS}$ is the Norton-Sullivan extension, this is simply the linear wave equation.

\section*{acknowledgments}
The authors would like to express their gratitude to Dennis Sullivan and Chris Bishop for their remarks and questions during the early stages of this article. F. Gonçalves and L. Oliveira gratefully acknowledge the support of the Office of Naval Research through grant GRANT14201749 (award number N629092412126). F. Gonçalves also acknowledges support from the following funding agencies: The Serrapilheira Institute (Serra-2211-41824), FAPERJ (E-26/200.209/2023) and CNPq (309910/2023-4).

\bibliographystyle{plain}
\bibliography{biblio2}

\end{document}